\newtheorem{thm}{Theorem}
\newtheorem{cor}{Corollary}
\newtheorem{prop}{Proposition}
\theoremstyle{definition}
\theoremstyle{remark}
\newtheorem*{rem*}{Remark}
\newtheorem{rem}{Remark}
\newcommand{\mn}{{\mathbb N}}
\newcommand{\mc}{{\mathbb C}}
\renewcommand{\rho}{\varrho}
\renewcommand{\Re}{\operatorname{Re}}
\newcommand{\hil}{\mathcal{H}}
\begin{document}

\title[]{Some remarks on upper bounds for Weierstrass primary factors and their application in spectral theory} 

\begin{abstract}
We study upper bounds on Weierstrass primary factors and discuss their application in spectral theory. One of the main aims of this note is to draw attention to works of Blumenthal and Denjoy from 1910, but we also provide some new results and some numerical computations of our own.
\end{abstract}

\author[M. Hansmann]{Marcel Hansmann}
\address{Faculty of Mathematics\\ 
Chemnitz University of Technology\\
Chemnitz\\
Germany.}
\email{marcel.hansmann@mathematik.tu-chemnitz.de}

\maketitle

\section{Introduction}

This short note is concerned with the Weierstrass primary factors 
\begin{equation}
  \label{eq:3}
E_0(z) :=(1-z), \qquad  E_n(z):=  (1-z)\exp\left( \sum_{k=1}^{n} \frac{z^k}{k}\right),  \quad n \in \mn,   
\end{equation}
defined for $z \in \mc$. These factors  play an important role in complex analysis, most notably in Weierstrass' construction of entire functions with prescribed zeros \cite[p.77-124]{MR0218192}. For instance, Weierstrass showed that given a sequence $(z_l)_{l \in \mn} \subset \mc \setminus\{0\}$ with the property that $\sum_{l} |z_l|^{-n-1} < \infty$ for some $n \in \mn_0$, the canonical product 
 $$ P(z) = \prod_l E_{n}\left( \frac z {z_l} \right), \quad z \in \mc,$$
 defines an entire function which vanishes exactly at the points $z_l$. For many applications (see e.g. \cite{MR1172909} and references therein) it is important to control the growth of such canonical products and this requires suitable bounds on the primary factors $E_n$. For this reason, in the present note we will study the quantities
\begin{equation}
  \label{eq:12}
  C_{n,\alpha} := \sup_{z \in \mc \setminus \{0,1\}} \frac{\ln|E_{n}(z)|}{|z|^{n+\alpha}}, 
\end{equation} 
for $n \in \mn$ and $\alpha \in [0,1]$ (respectively $n = 0$ and $\alpha \in (0,1]$). That this supremum is finite will be discussed below. In other words, $C_{n,\alpha}$ is the smallest number such that 
\begin{equation}
  \label{eq:8}
|E_{n}(z)| \leq \exp(C_{n,\alpha} |z|^{n+\alpha}), \qquad z \in \mc.
\end{equation}

The existence of bounds of the form (\ref{eq:8}) had already been established in works of Lindel\"of \cite{zbMATH02659983} in 1902 and a comprehensive study of the constants $C_{n,\alpha}$ had been carried out by Blumenthal \cite{zbMATH02633798} and Denjoy \cite{zbMATH02633799} in 1910. However, it seems that only few mathematicians are/were aware of these studies so that in the following years parts of Blumenthal and Denjoy's results have been re-proven over and over again (see Remark \ref{rem:2} below). One of the main aims of the present note is to make the results of these two authors more  widely known. In addition, we will also provide a few results of our own. 
 
Our interest in the constants $C_{n,\alpha}$ originated from an application in spectral theory, namely, the study of regularized determinants of linear operators (see, e.g., \cite{b_Gohberg69, b_Simon05, MR917067, MR1744872, MR2201310, Hansmann15}). Let us briefly indicate what this is about: If $K$ is a compact linear operator on a Hilbert space $\hil$, whose singular numbers $s_n(K)$ are in $l_p(\mn)$ for some $p>0$, then the $p$-regularized determinant of $I-K$, where $I$ denotes the identity operator on $\hil$, is defined as
\begin{equation*}
  {\det}_p(I-K) := \prod_{l} E_{\lceil p \rceil -1} (\lambda_l(K)). 
\end{equation*}
Here $\lceil p \rceil := \min \{ n \in \mn : n \geq p\}$  and $|\lambda_1(K)| \geq |\lambda_2(K)| \geq \ldots$ denote the discrete eigenvalues of $K$, counted according to algebraic multiplicity. In particular, setting
\begin{equation}
  \label{eq:9} 
\Gamma_p := C_{\lceil p \rceil -1, p+1-\lceil p \rceil}  
\end{equation}
we can use (\ref{eq:8}) (and Weyl's inequality \cite{MR0030693})  to estimate
\begin{equation*}
  |{\det}_p(I-K)| \leq \exp \left( \Gamma_p \sum_n |\lambda_n(K)|^p \right) \leq \exp \left( \Gamma_p \sum_n s_n(K)^p \right).
\end{equation*}

Such regularized determinants play an important role in the spectral analysis of compact and compactly perturbed linear operators, since they allow to transfer the problem of analyzing the spectrum of a linear operator to the problem of studying the zero-set of a holomorphic function  (the function $\lambda \mapsto {\det}_p(I-\lambda K)$ is entire), which in turn is intimately connected to its growth behavior. It is for this reason that the constants $\Gamma_p$ appear in a large variety of eigenvalue estimates for linear operators, so a precise knowledge of their values is crucial. To mention just one example, in \cite{MR3296588} it was shown that for a bounded operator $B=A+K$ on a complex Banach space $X$, $K$ being a compact perturbation, one has the following upper bound on the number of discrete eigenvalues of $B$ in $\{ \lambda : |\lambda| > s\}$: For every $p>0$
\begin{equation}
  \label{eq:2}
  N_B(s) \leq \Gamma_p \cdot R_p \cdot \frac{s}{(s-\|A\|)^{p+1}} \sum_{n} a_n^p(K), \qquad s > \|A\|,
\end{equation}
where $R_p$ is an explicitly known constant and $a_n(K)$ denotes the $n$th approximation number of $K$. For other appearances of $\Gamma_p$ in eigenvalue estimates (sometimes with a different notation), see, e.g. \cite{MR2481997, MR2559715, MR2500508, HK10, MR2879252, MR3077277, MR3157981, Frank_Sabin_15, MR3177329, MR3320831, Hansmann15, frank15, frank15b, frank16, MR3606512}.  

The results from below will allow us to compute the $\Gamma_p$'s numerically (apparently, this has not been done before). We conclude this introduction with a plot of the result.  


 \begin{figure}[h]
 \centering 
 \includegraphics[width=0.58\textwidth]{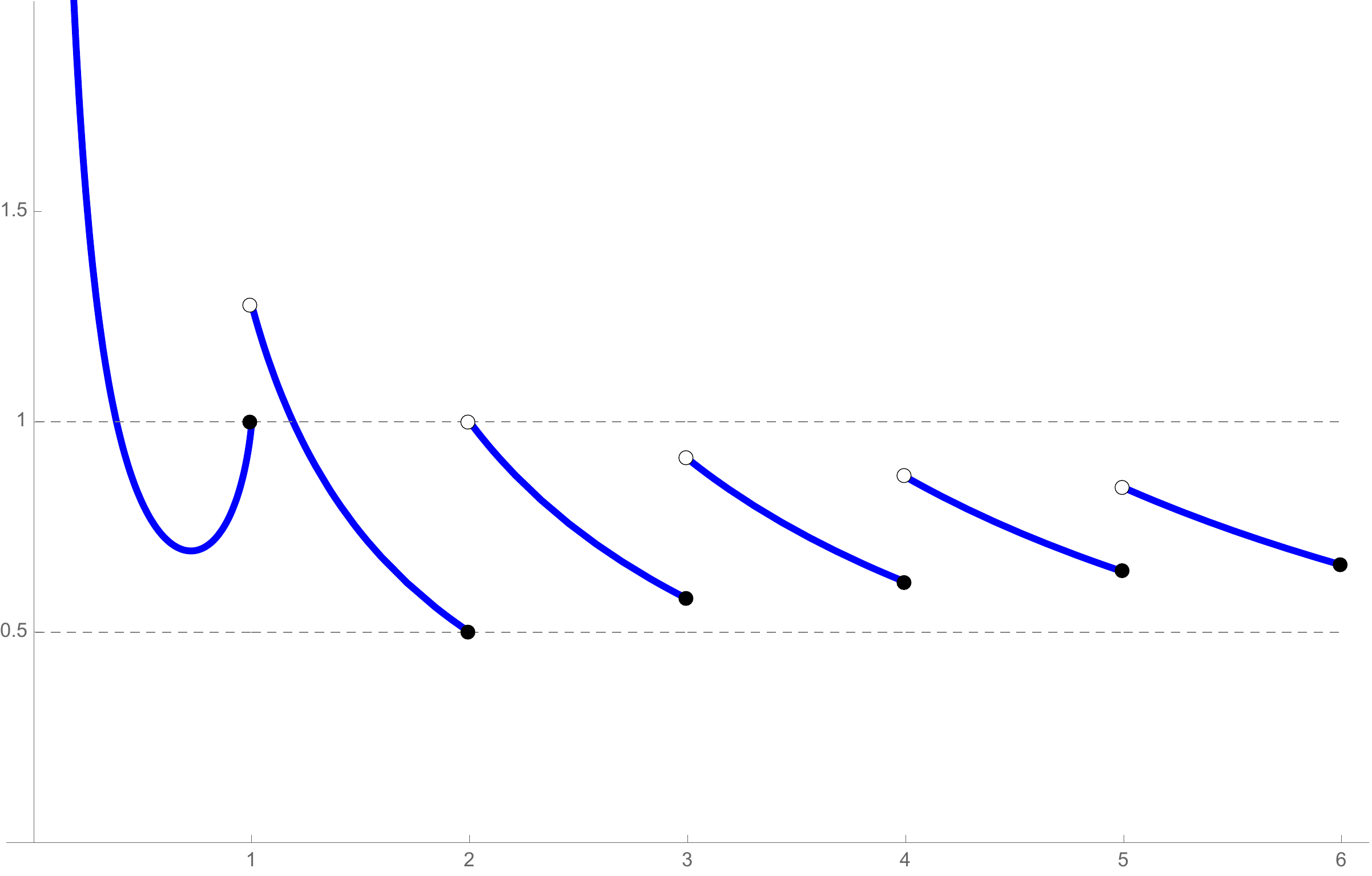}   
 \caption{The graph of $p \mapsto \Gamma_p$, evaluated numerically.}
 \vspace*{-10pt}
 \end{figure}

\section{The case $n \geq 1$}

We start with a look at the case $n \geq 1$. So for $n \in \mn$ and $\alpha \in [0,1]$ let us consider the function
$$ g: z \mapsto \frac{\ln|E_{n}(z)|}{|z|^{n+\alpha}}.$$
Then the following facts can be easily checked:
\begin{enumerate}
    \item $g$ is continuous in $\mc \setminus \{0,1\}$.
 \item $g(z)$ tends to $-\infty$ for $z \to 1$.
 \item Setting $z=re^{i\theta}$ with $r<1$ we can use the Taylor expansion of $\log(1-z)$ to obtain
\begin{eqnarray*}
\ln|E_{n}(z)| = \Re\left(\log(1-z) + \sum_{k=1}^n \frac{z^k}{k} \right) 
= - \sum_{k=n+1}^\infty \frac{r^k \cos(k\theta)}{k},
\end{eqnarray*}
which shows that in case $\alpha \in [0,1)$ the map $g$ can be continuously extended to $z=0$ (setting $g(0)=0$). Moreover, if $\alpha = 1$ then $\limsup_{z \to 0} g(z) = \frac 1 {n+1}$. 
    \item We have $$ \limsup_{|z| \to \infty} g(z) = \left\{
  \begin{array}{cl}
    0, & \alpha \in (0,1], \\
    \frac 1 n, & \alpha = 0,
  \end{array}\right.$$
and if $\alpha = 0$ then $g(r) > 1/n$ for $r >2$.
\end{enumerate} 
The next result is due to Blumenthal and Denjoy.

\begin{prop}[Blumenthal \cite{zbMATH02633798}, Denjoy  \cite{zbMATH02633799}]\label{prop1}
  Let $n \in \mn$ and $\alpha \in [0,1]$. Then the function
$$ (0,\infty) \ni r \mapsto \max_{|z|=r} g(r) = \max_{|z|=r}   \frac{\ln|E_{n}(z)|}{r^{n+\alpha}}$$
is monotone increasing on $(0,1+\frac 1 n]$. Moreover, for $r \geq 1+\frac 1 n$ we have
$$ \max_{|z|=r}  \ln|E_{n}(z)| = \ln |E_n(r)|.$$ 
\end{prop}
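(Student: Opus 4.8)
The plan is to derive everything from the logarithmic derivative of $E_n$. A one-line computation gives $E_n'(z)/E_n(z) = z^n/(z-1)$, so writing $z = re^{i\theta}$ and $h(\theta) := \ln|E_n(re^{i\theta})| = \Re\log E_n(z)$, the chain rule ($\partial_\theta z = iz$, $\partial_r z = z/r$) yields, for $re^{i\theta}\neq 1$,
$$\partial_\theta h = -\Im\frac{z^{n+1}}{z-1} = \frac{r^{n+1}\bigl(\sin((n+1)\theta) - r\sin(n\theta)\bigr)}{|1-re^{i\theta}|^2}, \qquad r\,\partial_r h = \Re\frac{z^{n+1}}{z-1}.$$
I would record these two identities at the outset, since both assertions reduce to them. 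In particular the sign of $\partial_\theta h$ is exactly the sign of $N(\theta) := \sin((n+1)\theta) - r\sin(n\theta)$.

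For the second assertion, the even symmetry $h(-\theta)=h(\theta)$ reduces matters to showing $h(\theta)\le h(0)$ on $[0,\pi]$ when $r\ge 1+\frac1n$. The threshold appears transparently here: $N(0)=0$ and $N'(0) = (n+1)-rn$, which is $\le 0$ precisely when $r\ge 1+\frac1n$, so $\theta=0$ is a local maximum exactly in this range. Since $N$ changes sign on $(0,\pi)$ once $n\ge 2$, the function $h$ is not monotone and a genuinely global argument is required. I would prove the global inequality from the closed form
$$h(0) - h(\theta) = \sum_{k=1}^n \frac{(1-\cos(k\theta))\,r^k}{k} - \frac12\ln\!\left(1 + \frac{2r(1-\cos\theta)}{(r-1)^2}\right), \qquad r>1,$$
which follows by inserting the Taylor series of $\log(1-1/z)$ (valid for $|z|>1$) into $\ln|1-z|$. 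One then shows the nonnegative trigonometric sum dominates the logarithm. The binding case is $\theta\to 0^+$, where a second-order expansion reduces the inequality to $\sum_{k=1}^n k r^k \ge r/(r-1)^2$, i.e. to $h''(0)\le 0$, again equivalent to $r\ge 1+\frac1n$; for $\theta$ bounded away from $0$ the crude bound $\ln(1+x)\le x$ with a few terms of the sum leaves ample slack (e.g. at $\theta=\pi$ one compares $2r+\cdots$ with $\ln\frac{r+1}{r-1}$).

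For the first assertion it suffices to take $\alpha=1$: writing $M(r)=m(r)/r^{n+\alpha}$ with $m(r)=\max_\theta h(\theta)>0$ (easily checked on $(0,1+\frac1n]$), one has $M'(r)\ge 0 \iff r\,m'(r)\ge (n+\alpha)\,m(r)$, and the largest $\alpha$ is the most demanding. For $r<1+\frac1n$ the maximizer $\theta^*$ is interior, so by the envelope theorem $m'(r)=\partial_r h(\theta^*)$; substituting the critical-point relation $\sin((n+1)\theta^*)=r\sin(n\theta^*)$ into $r\,\partial_r h=\Re\frac{z^{n+1}}{z-1}$ collapses it to the clean identity
$$r\,m'(r) = \frac{r^{n+1}\sin(n\theta^*)}{\sin\theta^*}.$$
The claim then becomes $\frac{r^{n+1}\sin(n\theta^*)}{\sin\theta^*}\ge (n+1)\,h(\theta^*)$. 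As consistency checks, when $r\to 0^+$ one has $\theta^*\to \pi/(n+1)$, both sides are $\sim r^{n+1}$, so the inequality is asymptotically an equality (matching $M(0^+)=\frac1{n+1}$); and at the endpoint $r=1+\frac1n$ the maximizer reaches $\theta^*=0$, where the identity gives $r^{n+1}/(r-1)=n\,r^{n+1}$, dovetailing with the second assertion.

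I expect the main obstacle to be exactly the two global estimates above: bounding the trigonometric sum below against the logarithm uniformly in $\theta$ (second assertion), and bounding $h(\theta^*)$ above to secure $r\,m'(r)\ge(n+1)\,m(r)$ (first assertion). In both cases the difficulty is that $h$ is not monotone in $\theta$, so one cannot merely sign a derivative; the redeeming feature is that the only tight regime is the degenerate one ($\theta\to 0$, respectively $r\to 0$), where everything is pinned by the same second-order data that produces the threshold $1+\frac1n$. A minor technical point I would address is the differentiability of $m(r)$ at radii where the maximizer is non-unique, which I would handle by working with one-sided Dini derivatives of the maximum of the smooth family $\theta\mapsto h(\theta;r)$.
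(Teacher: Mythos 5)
Your reductions are all correct, and I verified them independently: $E_n'/E_n = z^n/(z-1)$ and the two derivative formulas are right; the closed form $h(0)-h(\theta)=\sum_{k=1}^n \frac{r^k(1-\cos k\theta)}{k}-\frac12\ln\bigl(1+\frac{2r(1-\cos\theta)}{(r-1)^2}\bigr)$ is exact for $r>1$; the envelope identity $r\,m'(r)=r^{n+1}\sin(n\theta^*)/\sin\theta^*$ does follow from $\sin((n+1)\theta^*)=r\sin(n\theta^*)$ via $\sin^2 A+\sin^2 B-2\sin A\sin B\cos(A-B)=\sin^2(A-B)$; and the second-order condition $\sum_{k=1}^n k r^k\ge r/(r-1)^2$ is indeed equivalent to $r\ge 1+\frac1n$, since $\sum_{k=1}^n kr^k = r\bigl(1-(n+1)r^n+nr^{n+1}\bigr)/(1-r)^2$. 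So the scaffolding is sound, and the reduction of the monotonicity claim to the single $\alpha=1$ inequality $r\,m'(r)\ge(n+1)m(r)$ (using $m>0$, which follows from Jensen's formula) is a genuinely good organizing idea. But what remains is not a technicality: both of the ``global estimates'' you defer are the entire mathematical content of the proposition, and you leave both unproven — you say as much in your final paragraph. As it stands this is a correct proof \emph{plan}, not a proof. (For calibration: the paper itself does not prove the proposition either; it cites Blumenthal, Denjoy, and Merzlyakov's Theorem 1, so there is no shortcut in the source to lean on.)

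Two concrete failure points. First, for the second assertion your claim that away from $\theta=0$ the bound $\ln(1+x)\le x$ ``with a few terms of the sum leaves ample slack'' is quantitatively wrong in the critical regime: at $r=1+\frac1n$ with $n$ large, the right-hand side is $\frac12\ln\bigl(1+2rn^2(1-\cos\theta)\bigr)=\ln n+O(1)$, while any fixed number of terms of the sum is $O(1)$; you need the \emph{whole} sum, $\sum_{k=1}^n\frac{1-\cos k\theta}{k}=\ln n+\gamma+\ln(2\sin(\theta/2))+o(1)$, and the comparison then hinges on a constant-size margin (roughly $\gamma$ versus $\frac12\ln r$), with the crossover regime $\theta\asymp 1/n$ requiring separate treatment. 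Moreover, at the threshold radius $r=1+\frac1n$ your second-order expansion is \emph{exactly} an equality, so the sign of $h(0)-h(\theta)$ near $\theta=0$ is decided at fourth order — one must check $6S_1^2+S_1\ge S_3$ with $S_j=\sum_{k=1}^n k^jr^k$ (true, but nowhere in your argument). Second, for the first assertion, the inequality $r^{n+1}\sin(n\theta^*)/\sin\theta^*\ge(n+1)h(\theta^*)$ is only checked asymptotically at the two endpoints $r\to0^+$ and $r=1+\frac1n$; since for $n\ge2$ there are several critical branches, you would additionally have to identify which critical point is the global maximizer, rule out $\theta^*=\pi$ (where your identity degenerates to $0/0$), and control the sign of $\sin(n\theta^*)$ — none of which is addressed. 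Until these two estimates are actually carried out uniformly in $r$ and $n$, the proposition is not established.
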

For a proof of the proposition we refer to the original works or, alternatively, to a recent re-proof given by Merzlyakov in \cite{zbMATH06594447}, Theorem 1.\\

The previous proposition, together with (1)-(4) from above, shows that $C_{n,\alpha}$ is indeed finite and that $$C_{n,\alpha} = \max_{z \in \mc \setminus \{0,1\}} g(z), \qquad n \in \mn, \quad \alpha \in [0,1].$$ As noticed in the introduction, this observation goes (at least) back to work of Lindel\"of \cite{zbMATH02659983} in 1902 (see also Pringsheim \cite{zbMATH02653595}).
A further consequence of Proposition \ref{prop1} is the following result. 
\begin{cor}\label{cor:1}
   Let $n \in \mn$ and $\alpha \in [0,1]$. Then  
   \begin{equation}
     \label{eq:13}
     C_{n,\alpha}= \max_{r \geq 1 + \frac 1 n} \frac{\ln|E_n(r)|}{r^{n+\alpha}}.
   \end{equation}
Moreover, the mapping $ [0,1] \ni \alpha \mapsto C_{n,\alpha}$
is monotone decreasing and convex.
\end{cor}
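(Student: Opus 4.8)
The plan is to derive everything from Proposition~\ref{prop1}: the first step extracts the formula (\ref{eq:13}), and the monotonicity and convexity then drop out of it once one observes a positivity property of $\ln|E_n(r)|$.

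First I would rewrite $C_{n,\alpha}=\max_{z}g(z)=\sup_{r>0}h(r)$, where $h(r):=\max_{|z|=r}g(z)$ and the supremum is finite by the remarks following Proposition~\ref{prop1}. By that proposition $h$ is increasing on $(0,1+\tfrac1n]$, so its supremum over this interval is the endpoint value $h(1+\tfrac1n)$; and for $r\geq 1+\tfrac1n$ the proposition gives $\max_{|z|=r}\ln|E_n(z)|=\ln|E_n(r)|$, i.e. $h(r)=\ln|E_n(r)|/r^{n+\alpha}$. Since $h(1+\tfrac1n)$ lies in the range $r\geq 1+\tfrac1n$, the two facts combine to give $C_{n,\alpha}=\sup_{r\geq 1+1/n}\ln|E_n(r)|/r^{n+\alpha}$. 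To replace the supremum by a maximum I would use (3)--(4): the integrand is continuous on $[1+\tfrac1n,\infty)$ and its behaviour as $r\to\infty$ (tending to $0$ when $\alpha>0$, and to $1/n$ from above when $\alpha=0$) forces the supremum to be attained at a finite $r$, which is (\ref{eq:13}).

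The one genuinely essential observation, which I would isolate as a small lemma, is that $\psi(r):=\ln|E_n(r)|>0$ for all $r\geq 1+\tfrac1n$. For $r>1$ one has $\psi(r)=\ln(r-1)+\sum_{k=1}^n r^k/k$ with $\psi'(r)=\tfrac1{r-1}+\sum_{k=1}^n r^{k-1}=\tfrac{r^n}{r-1}>0$, so $\psi$ is strictly increasing on $(1,\infty)$; hence it suffices to check the endpoint, and there $\psi(1+\tfrac1n)=-\ln n+\sum_{k=1}^n(1+\tfrac1n)^k/k\geq -\ln n+\sum_{k=1}^n \tfrac1k>0$ because $\sum_{k=1}^n\tfrac1k>\ln n$.

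With this in hand the last two claims are short. For monotonicity, given $\alpha_1<\alpha_2$ I would pick $r_2\geq 1+\tfrac1n$ attaining $C_{n,\alpha_2}$ and use $r_2>1$ together with $\psi(r_2)>0$ to get $C_{n,\alpha_1}\geq \psi(r_2)/r_2^{n+\alpha_1}\geq \psi(r_2)/r_2^{n+\alpha_2}=C_{n,\alpha_2}$. For convexity I would view $C_{n,\cdot}$ as the pointwise supremum over $r\geq 1+\tfrac1n$ of the functions $\alpha\mapsto \psi(r)/r^{n+\alpha}=\big(\psi(r)r^{-n}\big)e^{-\alpha\ln r}$; since $\psi(r)r^{-n}>0$, each such function is a positive multiple of an exponential in $\alpha$ and hence convex, and a supremum of convex functions is convex. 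The hard (or at least the only delicate) point is precisely the positivity $\psi\geq 0$: it is what guarantees that each member of this family is convex rather than concave, so the supremum argument is legitimate --- and it is exactly here that the restriction to $r\geq 1+\tfrac1n$ coming from Proposition~\ref{prop1} is used.
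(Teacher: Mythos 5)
Your proof is correct and follows essentially the same route as the paper: (\ref{eq:13}) is extracted from Proposition \ref{prop1} together with facts (1)--(4), and monotonicity and convexity then follow from the behaviour of $\alpha \mapsto r^{-\alpha}$ for $r>1$ --- indeed the paper's entire proof is the one-line remark that this map is monotone decreasing and convex. Your only real addition is to make explicit the positivity $\ln|E_n(r)|>0$ for $r \geq 1+\frac{1}{n}$ (via $\psi'(r)=r^n/(r-1)$ and $\sum_{k=1}^n \frac{1}{k} > \ln n$), a point the paper leaves implicit but which is genuinely needed so that each function $\alpha \mapsto \ln|E_n(r)|\, r^{-n-\alpha}$ is decreasing and convex rather than increasing and concave.
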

The observation that the previous mapping is convex seems to be new.
\begin{proof}
Just note that for $r>1$ the mapping $[0,1] \ni \alpha \mapsto r^{-\alpha}$ is monotone decreasing and convex.   
\end{proof}
The identity (\ref{eq:13}) easily allows us to compute the constants $C_{n,\alpha}$ numerically. The following figure shows the result, using Mathematica's FindMaximum-Routine. Here each rectangle corresponds to the graph of $[0,1] \ni \alpha \mapsto C_{n,\alpha}$ for the specified $n \in \mn$.
 
 \begin{figure}[h]
 \centering
 \includegraphics[width=0.65\textwidth]{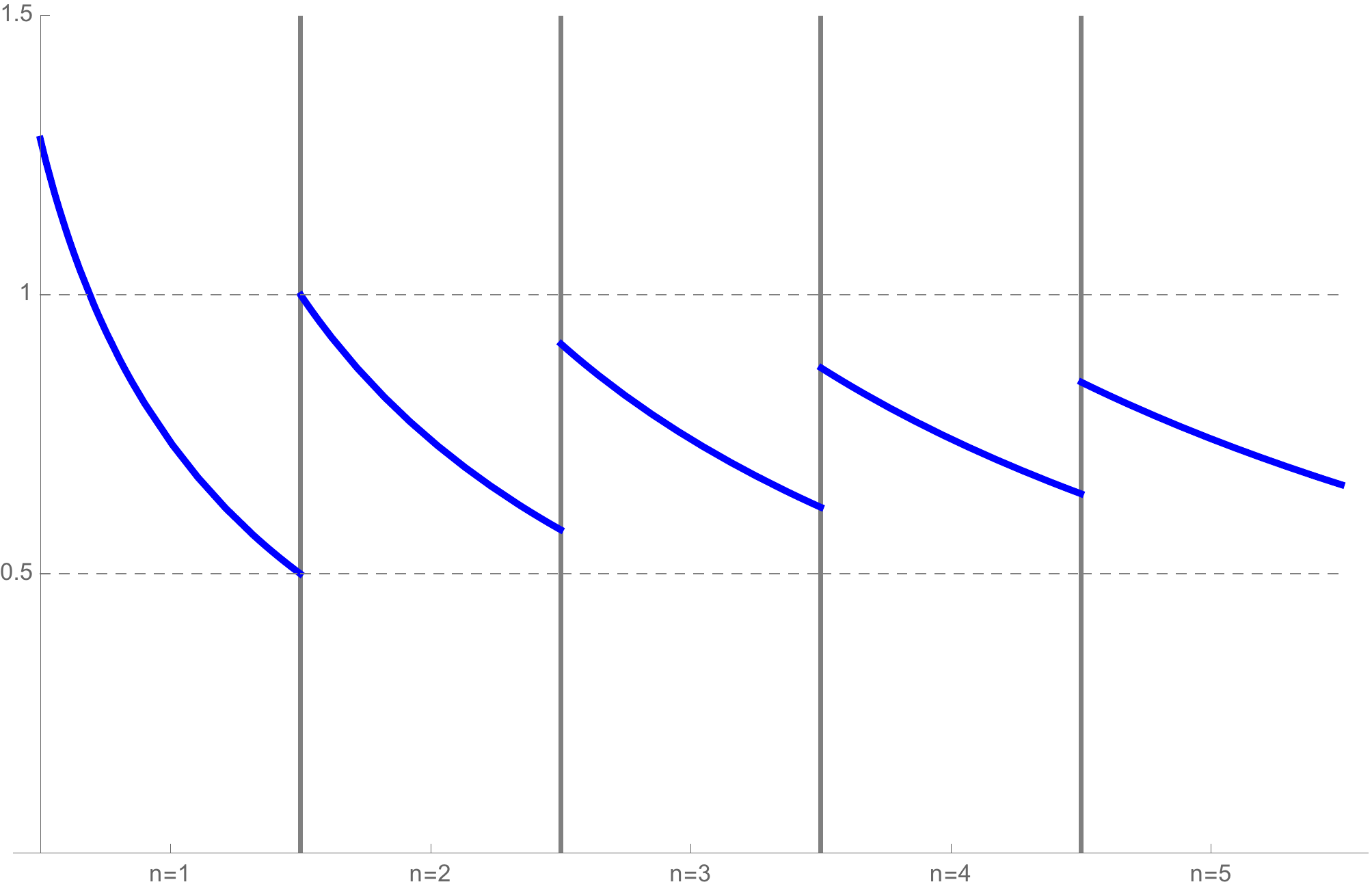}   
 \caption{A numerical evaluation of $[0,1] \ni \alpha \mapsto C_{n,\alpha}$ for $n \geq 1.$}
 \end{figure}   

Some of the features reflected in the previous figure can be proven rigorously. In the following theorem $W: [-e^{-1},\infty) \to [-1,\infty)$ denotes the principal branch of the Lambert function, i.e. the inverse of the strictly increasing function $$[-1,\infty) \ni x \mapsto xe^x \in [-e^{-1},\infty).$$ 

\begin{thm}[Blumenthal \cite{zbMATH02595638}, Denjoy \cite{zbMATH02633799}, Cohn \cite{MR0315094}]\label{thm1}
The following holds: 
  \begin{enumerate}
      \item[(i)] $\mn \ni n \mapsto C_{n,1}$ is monotonically increasing. Moreover, the sequence 
    \begin{equation}
      \label{eq:4}
      g_n:= \frac{n+1} n C_{n,1}, \qquad n \in \mn,
    \end{equation}
is monotonically decreasing and $0 < g_n \leq 1$ for all $n\in \mn$.
    \item[(ii)]  $ \mn \ni n \mapsto C_{n,0}$ is monotonically decreasing.
    \item[(iii)] For every $\alpha \in [0,1]$ we have
$$ \lim_{n \to \infty} C_{n,\alpha} =: x_0^{-1} \approx 0.7423,$$
where $x_0$ is the unique positive zero of the function $e^x/x-\operatorname{Ei}(x)$. Here $\operatorname{Ei}$ denotes the exponential integral.
      \item[(iv)] $C_{1,0}=1+W(e^{-1})\approx 1.27846,$ $C_{1,1}=\frac 1 2,$  and $C_{2,0}=1$.
  \end{enumerate} 
\end{thm}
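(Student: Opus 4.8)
\emph{The engine.} My plan rests on the reduction to the positive real axis provided by Corollary~\ref{cor:1}: for $r>1$ one has $\ln|E_n(r)| = \ln(r-1) + \sum_{k=1}^n \frac{r^k}{k} =: h_n(r)$, and $C_{n,\alpha} = \max_{r\ge 1+1/n} h_n(r)\, r^{-(n+\alpha)}$. The decisive simplification is that the derivative telescopes,
\[ h_n'(r) = \frac{1}{r-1} + \sum_{k=1}^n r^{k-1} = \frac{1}{r-1} + \frac{r^n-1}{r-1} = \frac{r^n}{r-1}. \]
Hence the first-order condition $r\,h_n'(r) = (n+\alpha) h_n(r)$ for a maximiser $r^* = r^*_{n,\alpha}$ reads $\frac{(r^*)^{n+1}}{r^*-1} = (n+\alpha) h_n(r^*)$, and substituting back yields the clean value formula
\[ C_{n,\alpha} = \frac{h_n(r^*)}{(r^*)^{n+\alpha}} = \frac{(r^*)^{1-\alpha}}{(n+\alpha)(r^*-1)}. \]
I would first record (using Proposition~\ref{prop1}) that the maximiser is the interior critical point solving this equation; for $n=1,\alpha=1$ it sits at the boundary $r^*=2$, which happens to be critical as well, so the formula is available throughout.

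\emph{Part (iv) and the two easy monotonicities.} Part (iv) is then a direct computation: for $(\alpha,n)=(1,1)$ and $(0,2)$ the critical equation is solved by $r^*=2$, giving $C_{1,1}=\tfrac12$ and $C_{2,0}=1$; for $(\alpha,n)=(0,1)$ it becomes $(r-1)\ln(r-1)=r$, and the substitution $s=\ln(r-1)-1$ turns it into $se^s=e^{-1}$, i.e. $s=W(e^{-1})$, whence $C_{1,0}=\frac{r^*}{r^*-1}=1+W(e^{-1})$ after using $se^s=e^{-1}$. The two $n$-monotonicities fall out of the value formula by one-line comparisons. Writing $h_{n+1}=h_n+\frac{r^{n+1}}{n+1}$ and evaluating at $r^*=r^*_{n,1}$,
\[ C_{n+1,1}\ge \frac{h_{n+1}(r^*)}{(r^*)^{n+2}} = \frac{1}{r^*}\Big(C_{n,1}+\tfrac{1}{n+1}\Big) = \frac{1}{r^*}\big(C_{n,1}+(r^*-1)C_{n,1}\big) = C_{n,1}, \]
using $\frac{1}{n+1}=(r^*-1)C_{n,1}$; symmetrically, evaluating at $s=r^*_{n+1,0}$ gives $C_{n,0}\ge \frac{h_n(s)}{s^n}=s\big(C_{n+1,0}-\tfrac{1}{n+1}\big)=C_{n+1,0}$, since the value formula makes $C_{n+1,0}-\frac{1}{n+1}=\frac{C_{n+1,0}}{s}$. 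Finally $g_n=\frac{n+1}{n}C_{n,1}=\frac{1}{n(r^*_{n,1}-1)}$, so $g_n>0$ and, because $r^*_{n,1}\ge 1+\frac1n$, also $g_n\le 1$.

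\emph{The limit (iii).} Here the plan is to rescale by $r=1+\frac{x}{n}$. Then $(1+x/n)^{n+\alpha}\to e^x$, while
\[ h_n\big(1+\tfrac{x}{n}\big) = \ln\tfrac{x}{n} + \sum_{k=1}^n \frac{(1+x/n)^k}{k} \longrightarrow \operatorname{Ei}(x). \]
The mechanism is that $(1+x/n)^k=e^{kx/n}(1+o(1))$ uniformly for $k\le n$, that $\sum_{k=1}^n\frac1k=\ln n+\gamma+o(1)$ cancels $\ln\frac{x}{n}=\ln x-\ln n$, and that $\sum_{k=1}^n\frac{e^{kx/n}-1}{k}=\frac{x}{n}\sum_{k=1}^n f(kx/n)$ with $f(u)=\frac{e^u-1}{u}$ is a Riemann sum converging to $\int_0^x\frac{e^u-1}{u}\,du=\operatorname{Ei}(x)-\gamma-\ln x$. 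Consequently $h_n(1+x/n)r^{-(n+\alpha)}\to F(x):=\operatorname{Ei}(x)e^{-x}$, whose only critical point solves $F'(x)=e^{-x}\big(\frac{e^x}{x}-\operatorname{Ei}(x)\big)=0$, i.e. $x=x_0$, with $F(x_0)=\frac{1}{x_0}$. Evaluating the definition of $C_{n,\alpha}$ at $r=1+x/n$ gives $\liminf_n C_{n,\alpha}\ge F(x)$ for every $x\ge1$, hence (as $x_0>1$) $\ge x_0^{-1}$. For the matching upper bound I would show the rescaled maximiser $x_n:=n(r^*_{n,\alpha}-1)$ converges to $x_0$: writing the critical equation as $\frac{n}{n+\alpha}\cdot\frac{(1+x_n/n)^{n+1}}{x_n\,h_n(1+x_n/n)}=1$ and passing to the limit gives $\frac{e^{x}}{x\operatorname{Ei}(x)}=1$, whereupon $C_{n,\alpha}=\frac{(1+x_n/n)^{1-\alpha}}{(n+\alpha)x_n/n}\to x_0^{-1}$; the limit is manifestly independent of $\alpha$ and is in any case squeezed by $C_{n,1}\le C_{n,\alpha}\le C_{n,0}$ from Corollary~\ref{cor:1}.

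\emph{The main obstacles.} Two points require genuine care. First, the upper bound in (iii) hinges on the convergence $x_n\to x_0$, for which I need (a) that the maximiser is the \emph{unique} critical point, which I would obtain by showing the logarithmic derivative $r\mapsto \frac{r^{n+1}}{(r-1)h_n(r)}$ is strictly monotone on the region $h_n>0$; (b) a priori bounds $0<c\le x_n\le C$ to extract a convergent subsequence; and (c) uniqueness of the positive zero $x_0$ of $e^x/x-\operatorname{Ei}(x)$. Second — and this I expect to be the hardest part — is the monotonic decrease of $g_n$. The naive comparison used above for $C_{n,1}$ and $C_{n,0}$ is too lossy here, because $g_n$ decreasing is equivalent to $n\mapsto n(r^*_{n,1}-1)$ being nondecreasing, and the two competing factors in $g_n=\frac{n+1}{n}C_{n,1}$ (with $C_{n,1}$ increasing but $\frac{n+1}{n}$ decreasing) pull in opposite directions. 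I would attack it by tracking the rescaled maximiser $n(r^*_{n,1}-1)$ directly through the critical equation $\frac{r^{n+1}}{r-1}=(n+1)h_n(r)$ and establishing the required monotonicity in $n$; this is precisely the delicate estimate where the finer analysis of Blumenthal, Denjoy and Cohn enters, and where I would expect to spend most of the effort.
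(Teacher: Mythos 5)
Your engine checks out, and it is worth saying that the paper itself offers no proof of Theorem~\ref{thm1} to compare against: it attributes (i) to Blumenthal, (ii) to Cohn, (iii) to Denjoy, and treats (iv) as a routine evaluation, referring to the original works and to Merzlyakov. Measured on its own terms, the completed portions of your argument are correct. The telescoping $h_n'(r)=r^n/(r-1)$, the first-order condition $r^{n+1}/(r-1)=(n+\alpha)h_n(r)$ and the value formula $C_{n,\alpha}=(r^*)^{1-\alpha}/\bigl((n+\alpha)(r^*-1)\bigr)$ are all right; so are the three evaluations in (iv) (I verified that $r^*=2$ solves the critical equation in both cases $(n,\alpha)=(1,1),(2,0)$, and that $s=\ln(r-1)-1$ turns $(r-1)\ln(r-1)=r$ into $se^s=e^{-1}$), the two comparison arguments for $C_{n,1}$ increasing and $C_{n,0}$ decreasing (note these only need $C_{n,\alpha}\ge h_n(s)/s^{n+\alpha}$ for $s>1$, which holds since $C_{n,\alpha}$ is a supremum over all of $\mc$, so the constraint $s\ge 1+\frac1n$ causes no trouble), and the rescaling $r=1+x/n$ with the Riemann-sum identity $\int_0^x\frac{e^u-1}{u}\,du=\operatorname{Ei}(x)-\gamma-\ln x$, which correctly yields $\liminf_n C_{n,\alpha}\ge x_0^{-1}$. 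Your item (c) is in fact easy and you should not list it as an obstacle: $\frac{d}{dx}\bigl(\frac{e^x}{x}-\operatorname{Ei}(x)\bigr)=\frac{e^x(x-1)}{x^2}-\frac{e^x}{x}=-\frac{e^x}{x^2}<0$, and the function runs from $+\infty$ at $0^+$ to $-\infty$ at $\infty$ (since $\operatorname{Ei}(x)\sim\frac{e^x}{x}(1+\frac1x+\cdots)$), so $x_0$ exists and is unique.

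The genuine gaps are the two you half-acknowledge. First and foremost, the monotone decrease of $(g_n)$ is not proved at all: you reformulate it as the statement that $n\mapsto n(r^*_{n,1}-1)$ is nondecreasing and then stop. That reformulation is correct (via $g_n=1/\bigl(n(r^*_{n,1}-1)\bigr)$), but it is the one assertion in (i) that does not fall out of the value formula by a one-line comparison, and your sketch contains no mechanism for it; as you say yourself, this is where the actual work of Blumenthal/Denjoy/Cohn lives, so as it stands part of (i) is unproven. Second, the upper bound in (iii) rests on $x_n\to x_0$, which in turn needs your unexecuted items (a) and (b): uniqueness of the interior critical point (your proposed route, strict monotonicity of $r\mapsto r^{n+1}/\bigl((r-1)h_n(r)\bigr)$ where $h_n>0$, is plausible but not established) and a priori bounds $0<c\le x_n\le C$ uniform in $n$. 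A related smaller hole propagates through (i), (ii) and (iv): the value formula is only valid if the maximizer over $r\ge 1+\frac1n$ is a stationary point, i.e.\ if the derivative of $h_n(r)/r^{n+\alpha}$ is nonnegative at the boundary $r=1+\frac1n$; you verify this only for $n=1$, $\alpha=1$ (where the boundary happens to be critical), and assert it otherwise. This amounts to checking $n\bigl(1+\frac1n\bigr)^{n+1}\ge(n+\alpha)\,h_n\bigl(1+\frac1n\bigr)$, which also underlies your claim $r^*_{n,1}\ge 1+\frac1n$ and hence $g_n\le 1$; it looks true and provable by the same Riemann-sum estimates you use in (iii), but it needs to be written out. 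In short: (ii), (iv), the increase of $C_{n,1}$, $0<g_n\le1$, and the lower bound in (iii) are essentially done modulo this stationarity check; the decrease of $(g_n)$ and the upper half of (iii) remain open in your proposal.
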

Here (i) is due to Blumenthal, (ii) is due to Cohn and (iii) is due to Denjoy. The evaluation of the constants in (iv) is straightforward, but can also, for instance, be found in the work of Cohn. It should be noted that Cohn did his work in 1973, probably unaware of the previous work of Blumenthal and Denjoy, and that he also provided a proof of (iii). Most of the above results can also be found in a recent paper by Merzlyakov \cite{zbMATH06594447}.

\begin{rem}\label{rem:2}
The bound $$C_{n,1} \leq n/(n+1),$$ implicitly contained in (i), had previously been obtained both by Blumenthal and Denjoy in their 1910 papers \cite{zbMATH02633798} and \cite{zbMATH02633799}. Unaware of this work, Smithies \cite{MR0004699} (in 1941) and Brascamp \cite{MR0291889} (in 1969) re-proved this inequality for $n=1$ and $n=3$, respectively. The general result, however, seems to have been widely forgotten over time. Standard text-books on entire functions (such as, e.g., Nevanlinna  \cite{zbMATH02526051})  usually contain only the much weaker but easier to prove inequality that $C_{n,1} \leq 3e(2+\ln(n))$. Eventually the general bound was re-proven by Marchetti \cite{MR1244940} in 1993. There we can also find the fact that the constants $g_n$ appearing in (\ref{eq:4}) can be bounded above by
$$ h_n:=\exp\left(- \frac{n-1}{4(n+1)}\left\{1 + \left(1 + 2 \left(1+\operatorname{cosec} \frac \pi{n+1} \right)^{-1} \right)^n\right\}^{-1} \right).$$ 
However, $\lim_{n \to \infty} h_n \approx 0.9995,$ whereas $\lim_{n \to \infty} g_n \approx 0.7423$. In 2013 and 2016, respectively, the general estimate was again re-proven by Gil' \cite{MR3035142} (with the exception of the case $n = 2$) and by Merzlyakov \cite{zbMATH06594447}. 
\end{rem}
 
As a consequence of Theorem \ref{thm1} and Corollary \ref{cor:1} we obtain the following result, which seems to be new. 
\begin{cor}
Let $\alpha \in [0,1]$. Then for every $n \in \mn$
\begin{equation}
  \label{eq:15}
C_{n,1}\leq C_{n,\alpha} \leq (1-\alpha) C_{n,0} + \alpha C_{n,1}.  
\end{equation}
In particular, 
\begin{equation}
  \label{eq:14}
C_{1,\alpha} \leq (1-\alpha)(1+W(e^{-1}))+\frac{\alpha}{2}  
\end{equation}
and for $n \geq 2$
\begin{equation}
  \label{eq:16}
C_{n,\alpha} \leq 1- \alpha \left(1-\min \left(x_0^{-1}, \frac n {n+1} \right) \right) \leq 1,
\end{equation}
with $x_0^{-1} \approx 0.7423$ as in Theorem \ref{thm1}. 
\end{cor}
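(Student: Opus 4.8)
The plan is to obtain both inequalities in \eqref{eq:15} directly from Corollary \ref{cor:1} and then to specialize them using Theorem \ref{thm1}. For the left inequality I would invoke that $\alpha \mapsto C_{n,\alpha}$ is monotone decreasing on $[0,1]$: since $\alpha \le 1$, this immediately gives $C_{n,1} \le C_{n,\alpha}$. For the right inequality I would use that the same map is convex; writing $\alpha = (1-\alpha)\cdot 0 + \alpha \cdot 1$, convexity on $[0,1]$ yields $C_{n,\alpha} \le (1-\alpha) C_{n,0} + \alpha C_{n,1}$. This settles \eqref{eq:15} with no further computation.

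For \eqref{eq:14} I would specialize the right-hand bound of \eqref{eq:15} to $n=1$ and insert the explicit values $C_{1,0} = 1 + W(e^{-1})$ and $C_{1,1} = \frac 1 2$ from Theorem \ref{thm1}(iv). For \eqref{eq:16} with $n \ge 2$ the idea is instead to replace $C_{n,0}$ and $C_{n,1}$ by constant bounds before substituting into \eqref{eq:15}. For the zeroth constant I would combine that $n \mapsto C_{n,0}$ is decreasing (Theorem \ref{thm1}(ii)) with $C_{2,0} = 1$ (Theorem \ref{thm1}(iv)) to get $C_{n,0} \le 1$. For the first constant I would use two facts from Theorem \ref{thm1}: the bound $g_n = \frac{n+1}{n} C_{n,1} \le 1$ from part (i) gives $C_{n,1} \le \frac{n}{n+1}$, while the monotone increase of $n \mapsto C_{n,1}$ towards its limit $x_0^{-1}$ (parts (i) and (iii)) gives $C_{n,1} \le x_0^{-1}$. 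Keeping the smaller of the two yields $C_{n,1} \le \min(x_0^{-1}, \frac{n}{n+1})$. Substituting both estimates into \eqref{eq:15} and abbreviating $m := \min(x_0^{-1}, \frac{n}{n+1})$ then gives
$$ C_{n,\alpha} \le (1-\alpha)\cdot 1 + \alpha\, m = 1 - \alpha(1-m), $$
which is the first inequality of \eqref{eq:16}; the trailing bound $1-\alpha(1-m) \le 1$ is immediate since $\alpha \ge 0$ and $m \le 1$.

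Since every step reduces to a direct appeal to Corollary \ref{cor:1} or Theorem \ref{thm1}, I do not expect any genuine obstacle. The only point deserving a moment of care is the estimate $C_{n,1} \le x_0^{-1}$, which rests on the observation that a monotone increasing sequence is bounded above by its limit; everything else is bookkeeping in assembling the previously established inequalities.
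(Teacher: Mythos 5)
Your proposal is correct and follows essentially the same route as the paper: both derive \eqref{eq:15} from the monotonicity and convexity of $\alpha \mapsto C_{n,\alpha}$ established in Corollary \ref{cor:1}, obtain \eqref{eq:14} from Theorem \ref{thm1}(iv), and obtain \eqref{eq:16} via $C_{n,0} \leq C_{2,0} = 1$ and $C_{n,1} \leq \min\left(\frac{n}{n+1}, x_0^{-1}\right)$. Your write-up merely spells out the justifications (e.g.\ that an increasing sequence is bounded by its limit, and that $g_n \leq 1$ gives $C_{n,1} \leq \frac{n}{n+1}$) which the paper leaves implicit.
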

\begin{proof}
The inequalities (\ref{eq:15}) follow by monotonicity and convexity of $\alpha \mapsto C_{n,\alpha}$. Estimate (\ref{eq:14}) follows by (\ref{eq:15}) and Theorem \ref{thm1}, part (iv). Finally, in the proof of (\ref{eq:16}) we use (\ref{eq:15}) and the facts that, by Theorem \ref{thm1}, $C_{n,0} \leq C_{2,0} = 1$ and $C_{n,1} \leq \min \left( \frac{n}{n+1}, x_0^{-1} \right)$.
\end{proof}
 


\section{The case $n=0$}

In this section we consider the case $n=0$ which can be computed explicitly (and quite easily).  The  results in this section seem to be new.\\

For $\alpha \in (0,1]$ we set 
$$ C_{0,\alpha} := \sup_{z \in \mc \setminus \{0,1\}} \frac{\ln|E_{0}(z)|}{|z|^{\alpha}} = \sup_{ z \in \mc \setminus \{0,1\}} \frac{\ln|1-z|}{|z|^\alpha}.$$
Since $\ln|1-z| \leq \ln(1+|z|)$, with equality for $z \in (-\infty,0)$, we see that
$$ C_{0,\alpha} = \max_{r \geq 0} \frac{\ln(1+r)}{r^\alpha}.$$
In the following theorem $W$ denotes again the principal branch of the Lambert function. 
\begin{thm}\label{thm2} 
The following holds:
\begin{enumerate}
    \item[(i)]    $C_{0,1} = 1$.
    \item[(ii)] If $\alpha \in (0,1)$, then 
\begin{equation}
  \label{eq:5}
C_{0,\alpha} = \frac 1 \alpha  r_\alpha^\alpha (1-r_\alpha)^{1-\alpha},
\end{equation}
where 
 \begin{equation}
   \label{eq:11}
   r_\alpha= -\alpha W\left(-\frac 1 \alpha e^{-\frac 1 \alpha}\right) \in (0,\alpha).
 \end{equation}
\end{enumerate}
\end{thm}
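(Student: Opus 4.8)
The plan is to work throughout with the real-variable reduction already recorded just before the theorem, namely
$$C_{0,\alpha} = \max_{r \geq 0} \frac{\ln(1+r)}{r^\alpha},$$
and to treat both parts by elementary one-variable calculus. For part (i) I would note that $\ln(1+r) < r$ for every $r>0$, so $\frac{\ln(1+r)}{r} < 1$ throughout, while $\lim_{r\to0^+}\frac{\ln(1+r)}{r} = 1$; since the quotient is in fact strictly decreasing (its derivative has the sign of $\frac{r}{1+r}-\ln(1+r)<0$), the supremum equals $1$ and is approached as $r\to0^+$. This settles $C_{0,1}=1$.

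For part (ii), set $f(r):=\frac{\ln(1+r)}{r^\alpha}$ on $(0,\infty)$. First I would record the boundary behaviour: because $\alpha\in(0,1)$ we have $f(r)\sim r^{1-\alpha}\to0$ as $r\to0^+$ and $f(r)\to0$ as $r\to\infty$, while $f>0$ on $(0,\infty)$; hence $f$ attains its maximum at an interior critical point. Differentiating and clearing the factor $r^{-\alpha-1}$, the critical points are precisely the solutions $r>0$ of
$$\frac{r}{1+r} = \alpha\ln(1+r).$$

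The key manipulation is the substitution $t:=\frac{1}{1+r}\in(0,1)$, i.e. $r=\frac{1-t}{t}$ and $\ln(1+r)=-\ln t$, under which the critical-point equation becomes $\ln t = \frac{t-1}{\alpha}$, or equivalently $t\,e^{-t/\alpha}=e^{-1/\alpha}$. Multiplying by $-\frac1\alpha$ casts this as $w e^{w} = -\frac1\alpha e^{-1/\alpha}$ with $w=-t/\alpha$, whence $t = -\alpha W\!\left(-\frac1\alpha e^{-1/\alpha}\right) = r_\alpha$. Here I would check that $-\frac1\alpha e^{-1/\alpha}\in(-e^{-1},0)$ (since $x\mapsto xe^{-x}$ decreases past $x=1$), so that the principal branch applies and delivers $r_\alpha\in(0,\alpha)$. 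A short monotonicity argument for $g(t):=\ln t - \frac{t-1}{\alpha}$ on $(0,1)$ — it increases on $(0,\alpha)$, decreases on $(\alpha,1)$, and vanishes at $t=1$ — shows that $t=r_\alpha$ is the unique root in the open interval, so $f$ has a single interior critical point, which must therefore be the global maximizer $r_*$ (with $r_*=\frac{1-r_\alpha}{r_\alpha}$).

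It then remains to substitute back. From $1+r_*=1/r_\alpha$ one gets $\ln(1+r_*)=-\ln r_\alpha=\frac{1-r_\alpha}{\alpha}$ (using the defining relation $\ln r_\alpha=\frac{r_\alpha-1}{\alpha}$ of $r_\alpha$), while $r_*^\alpha=(1-r_\alpha)^\alpha r_\alpha^{-\alpha}$; dividing gives $C_{0,\alpha}=f(r_*)=\frac1\alpha\, r_\alpha^\alpha(1-r_\alpha)^{1-\alpha}$, as claimed. I expect the only genuinely delicate point to be the passage through the Lambert function: one must verify that the transformed equation lands in the domain of the principal branch, select the correct root (discarding the spurious solution $t=1$, which corresponds to the boundary value $r=0$), and confirm that the resulting critical point is a maximum rather than a minimum. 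Everything else is routine calculus and algebra.
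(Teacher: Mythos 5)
Your proof is correct, and its core strategy coincides with the paper's: reduce to maximizing $\ln(1+r)/r^\alpha$ over $(0,\infty)$, note the vanishing boundary limits, derive the critical-point equation $\frac{r}{1+r}=\alpha\ln(1+r)$, and solve it via the principal branch of the Lambert function. The execution differs in one worthwhile respect: you substitute $t=\frac{1}{1+r}$ at the outset, so that $r_\alpha$ emerges directly as the solution $t$ of the transformed equation $t e^{-t/\alpha}=e^{-1/\alpha}$ --- in other words, your route makes transparent that $r_\alpha=\frac{1}{1+r_*}$ where $r_*$ is the maximizer. The paper instead solves for the maximizer explicitly as $R=e^{W\left(-\frac1\alpha e^{-1/\alpha}\right)+\frac1\alpha}-1$, evaluates $g(R)$, and then needs the algebraic identity $r_\alpha e^{\frac1\alpha(1-r_\alpha)}=1$ to massage the answer into the form \eqref{eq:5}; your substitution dispenses with that final manipulation. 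You also supply two details the paper compresses into ``a short computation'': a genuine uniqueness argument for the positive critical point (via the monotonicity of $t\mapsto \ln t-\frac{t-1}{\alpha}$ on $(0,\alpha)$ and $(\alpha,1)$), and the observation that the second real solution of $we^w=-\frac1\alpha e^{-1/\alpha}$ (on the branch $W_{-1}$, namely $w=-\frac1\alpha$, i.e.\ $t=1$) corresponds to the excluded boundary point $r=0$, which cleanly justifies restricting to the principal branch. In short: same method, but your parametrization yields a slightly cleaner derivation of \eqref{eq:5} and a more complete account of root selection.
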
 
 \begin{figure*}[h]
 \centering   
 \includegraphics[width=0.6\textwidth]{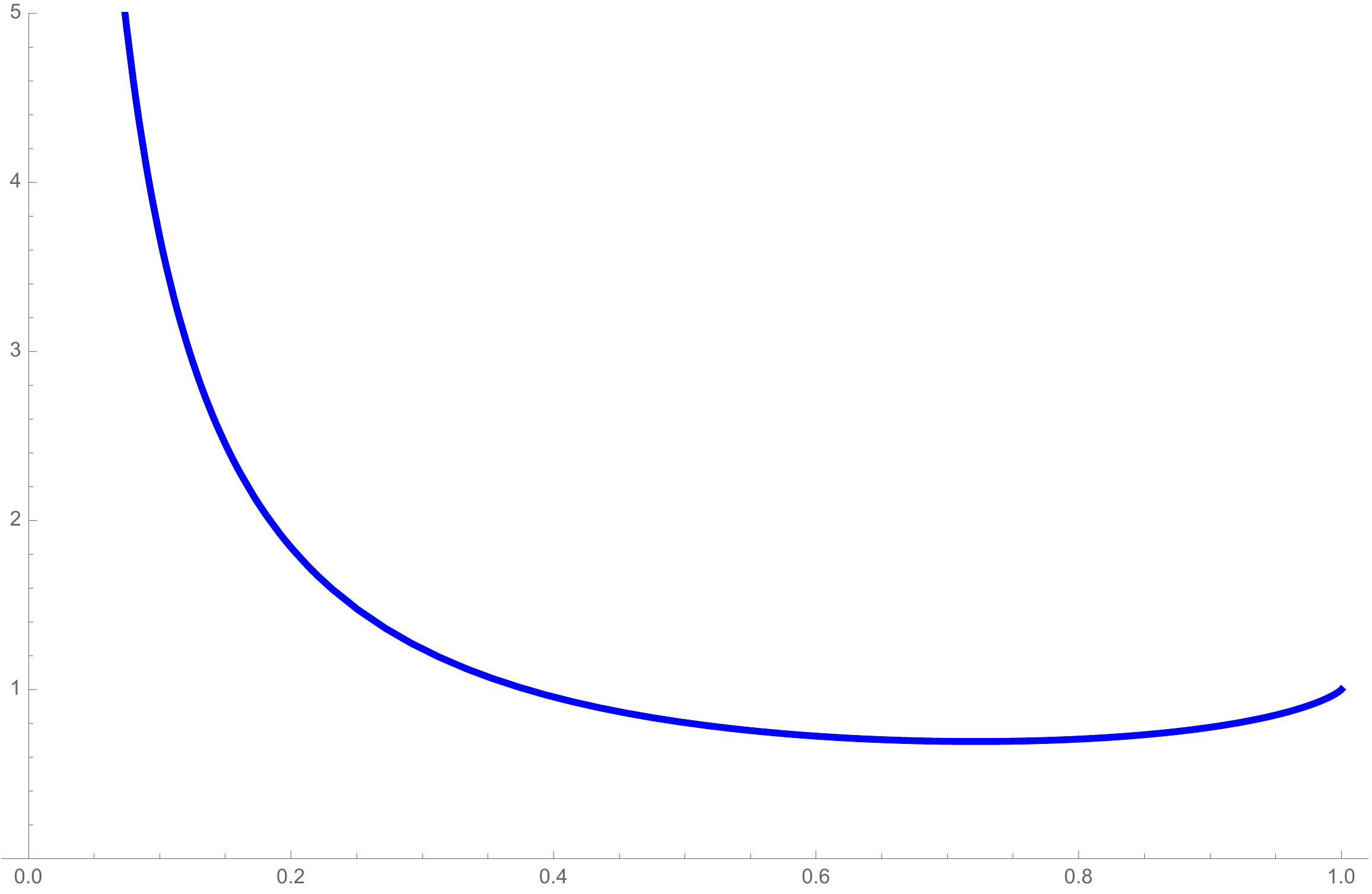}    
 \caption{The graph of $(0,1] \ni \alpha \mapsto C_{0,\alpha}$}
 \end{figure*} 
 \begin{rem}
 Since $W(x) \approx x$ for small $x>0$, we see that 
 $$ \lim_{\alpha \searrow 0} \alpha \cdot C_{0,\alpha} = e^{-1}.$$ 
 Moreover, by maximizing $r^\alpha(1-r)^{1-\alpha}$ over $r \in (0,\alpha)$ we obtain
\begin{equation*}
  C_{0,\alpha} \leq \left( \frac 1 \alpha -1 \right)^{1-\alpha}, \qquad \alpha \in (0,1].
\end{equation*}
 \end{rem}
\begin{proof}[Proof of Theorem \ref{thm2}]
(i) Since $\ln(1+r) \leq r$ for all $r \geq 0$ and 
$$ \lim_{r \searrow 0} \frac{\ln(1+r)}{r}=1$$
we obtain $C_{0,1}=1$. 

(ii) Now let $0<\alpha < 1$ and set 
$$ g(r):= \frac{\ln(1+r)}{r^\alpha}, \quad r > 0.$$
Note that $g$ is positive and $\lim_{r \searrow 0} g(r)= \lim_{r \to \infty} g(r)=0$ so it has a maximum in $(0,\infty)$. A short computation shows that $g'(r)=0$ iff
$$ r = \alpha (1+r) \ln(1+r).$$ 
This equation has exactly one positive solution $R>0$ which, as a short computation shows, is given by
$$ R=e^{W\left(-\frac 1 \alpha e^{-\frac 1 \alpha}\right)+ \frac 1 \alpha}-1.$$
So the maximal value of $g$ is given by
\begin{eqnarray*}
  g(R) &=&\frac{W\left(-\frac 1 \alpha e^{-\frac 1 \alpha}\right)+ \frac 1 \alpha}{\left(e^{W\left(-\frac 1 \alpha e^{-\frac 1 \alpha}\right)+ \frac 1 \alpha}-1\right)^\alpha} = \frac 1 \alpha \frac{1-r_\alpha}{\left(e^{\frac 1 \alpha \left( 1 -r_\alpha \right)}-1\right)^\alpha}.
\end{eqnarray*}
Now in order to obtain (\ref{eq:5})  we expand this fraction by $r_\alpha^\alpha$ and use that 
\begin{eqnarray*}
 r_\alpha e^{\frac 1 \alpha \left( 1 -r_\alpha \right)} = r_\alpha e^{-\frac{r_\alpha}{\alpha}}e^{\frac 1 \alpha} &=& -\alpha W\left(-\frac 1 \alpha e^{-\frac 1 \alpha}\right) e^{ W\left(-\frac 1 \alpha e^{-\frac 1 \alpha}\right)} e^{\frac 1 \alpha} \\
&=& -\alpha \left(-\frac 1 \alpha e^{-\frac 1 \alpha}\right)e^{\frac 1 \alpha} =1.  
\end{eqnarray*}
\end{proof}

\section*{Acknowledgments}  

I would like to thank Leonid Golinskii for a helpful discussion. This work was funded by the Deutsche Forschungsgemeinschaft (DFG, German Research Foundation) - Project number HA 7732/2-1.




  \bibliographystyle{plain}           
  \bibliography{Bibliography}

\end{document}